\def\supp{\operatorname{supp}}
\def\cl{\operatorname{cl}}
\newtheorem{theorem}{Theorem}
\newtheorem{proposition}{Proposition}
\newtheorem{corollary}{Corollary}
\theoremstyle{remark}
\begin{document}

\title{With Andrzej Lasota there and back again}
\author{Ryszard Rudnicki}
\address{R. Rudnicki, Institute of Mathematics,
Polish Academy of Sciences, Bankowa 14, 40-007 Katowice, Poland.}
\email{rudnicki@us.edu.pl}
\keywords{chaos, invariant measure, partial differential equation, Markov operator, semigroup of operators, asymptotic stability,
piecewise deterministic Markov process, application to biological models}
\subjclass[2020]{35F25; 37A05; 37L40; 47D06; 60J76; 92D25}
%\date{April 10th, 2024}

\begin{abstract}
The paper below is a written version of the 17th Andrzej Lasota Lecture presented on January 12th, 2024 in Katowice. 
During the lecture we tried to show the impact of Andrzej Lasota's results on the author's research concerning various fields of mathematics, including chaos and ergodicity of dynamical systems,
Markov operators and semigroups and partial differential equations.
\end{abstract}

\maketitle

\section{Introduction}
\label{s:intro}
Let us start with a brief introduction of Professor Andrzej Lasota 
(1932--2006). He studied physics and mathematics at  Jagiellonian University (1951--1955).  From 1955--1975 he worked at Jagiellonian University, where he was the Dean of the Faculty of MPCh (1972--1975). 
 
He then moved to Katowice at the University of Silesia, where he worked until his death with a short break in the years of 1986--1988 spent at Maria Curie-Sk{\l}odowska University in Lublin.
During that time he had also part-time positions at 
Jagiellonian University (1975--2005) and Institute of Mathematics Polish Academy of Sciences (1995--2006).

His research interests concerned the following areas of mathematics: 
\begin{itemize}
\item[(1)] differential equations,
\item[(2)] dynamical systems: chaos, ergodicity and fractal theory,
\item[(3)] Markov operators,
\item[(4)] applications of mathematics to biology, physics and technology.
\end{itemize}

I met Professor Lasota at his lecture on the theory of differential equations in 1977, and after the exam, I began my research under his supervision. This was a period when he conducted very intensive research dealing with a variety of issues in pure and applied mathematics collaborating with well-known scholars, including J.A. Yorke -- American mathematician co-founder of chaos theory and M.C. Mackey -- Canadian biomathematician 
(see Fig.~\ref{r:zdjecie}).  This gave me the opportunity to learn about top scientific issues and join in this research.   

\begin{figure}[h]
\begin{center}
\includegraphics[height=10cm]{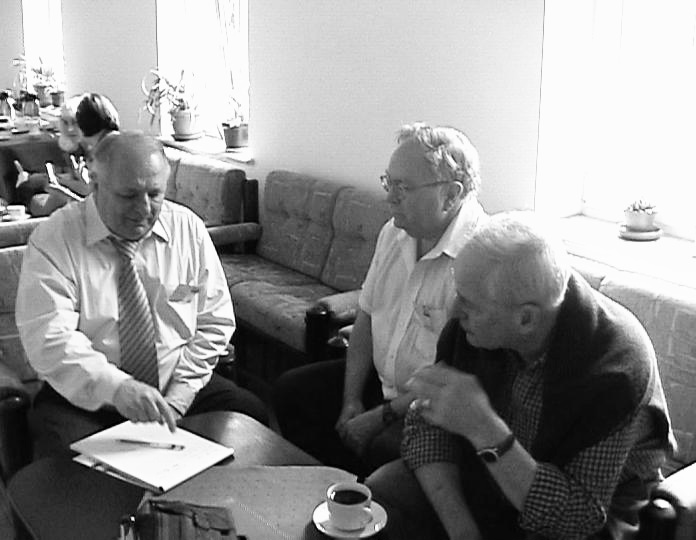}
\end{center}
\vskip1mm
\caption{Special Scientific Session  in honour of Professor Andrzej Lasota on the occasion of his 70th birthday, B\c edlewo 22.06.2002. From left A. Lasota, J.A. Yorke, M.C. Mackey.}
\label{r:zdjecie}
\end{figure}

During the lecture, I tried to show the impact of Andrew Lasota's results on my research. I will focus on a few selected topics, mainly on chaos theory for partial differential equations and asymptotics of Markov operators and semigroups and their applications in biology.

\section{Chaos and ergodicity of dynamical systems}
\label{s:chaos}
\subsection{General remarks}
\label{ss:gen}
Since the issues of chaos and ergodicity will occur at various points in the lecture, we will now collect the necessary definition and basic results. A more extensive introduction to these issues is presented in the survey papers~\cite{R-chaos-p,Rudnicki-er-chaos}.

Let $(X,\rho)$ be a metric space. 
A \textit{semiflow} $\{S^t\}_{t\ge 0}$   on $X$ is a collection of transformation 
$S^t\colon X\to X$, for $t\ge 0$, such that
\begin{itemize}
\item[(a)] $S^0=\operatorname{Id}$,  $\,S^{t+s}=S^t\circ S^s$ for $t,s\ge 0$,
\item[(b)] $S\colon [0,\infty)\times X\to X$, defined as $S(t,x):=S^t(x)$, is a continuous function of $(t,x)$.
\end{itemize}
The set  $\mathcal O(x)=\{S^t(x)\colon t\ge 0\}$ is called an orbit  of the point $x$.

For example if $f\colon \mathbb R^n\to \mathbb R^n$ is a  Lipschitz function 
then the problem 
\[
x'(t)=f(x(t)) ,\quad  x(0)=x_0\in \mathbb R^n  
\]
has unique solution $x \colon [0,\infty)\to\mathbb R^n$
and $S_t(x_0)=x(t)$ defines a semiflow on 
$\mathbb R^n$.  

If $S\colon X\to X$ is a continuous map, then the sequence of its iterates $\{S^n\}_{n=0}^{\infty}$ creates
a \textit{discrete time semiflow.}
Discrete and continuous time semiflows 
belong to a broader class called 
\textit{dynamical systems}.
In a dynamical system, instead of the continuity condition (b), for example, we can assume the measurability of a function
$S(t,x)$.

When  can we say that a dynamical system is chaotic? General answer is that
it has a simple 
and {\bf deterministic} description, but it behavies 
in a  complicated and {\bf random} way.
Random means here that  the semiflow has unpredictable behaviour. 
The semiflow is very sensitive on small  perturbations of initial data
and  looks like a system  with a stochastic noise. 

Almost all specialists are willing to accept this not too precise definition.
But,  in fact,  there are hundreds of  definitions of chaos.
Generally we have three different approaches  to chaos.

1. \textbf{Macroscopic approach:} 
the existence of global attractors with complicated structure (strange attractors).

2. \textbf{Microscopic approach:} 
the existence of trajectories, which are
unstable, turbulent or dense in the phase space;
 topologically mixing.

3. \textbf{Stochastic approach:} 
the existence of invariant measures having
strong ergodic and analytic properties.

It should be noted that our ``classification" of chaotic behaviour  is completely arbitrary, but it could help non-specialists what kind of problems are studied in the theory of chaos. In the lecture, I will focus on the connections between the second and third approaches to justify that the methods of ergodic theory
 allow us to obtain quite strong chaotic properties.

\subsection{Microscopic approach}
\label{ss:micro}
In this approach we have several different definitions of chaos.
A flow is \textit{chaotic  in the sense of  Auslander--Yorke 
\cite{AY}} if 
\begin{itemize}
\item[(a)] there exists a dense trajectory,    
\item[(b)] each trajectory is unstable.
\end{itemize}

Let us mention that a  semiflow on some topological vector space $X$
having  a dense orbit is said to be \textit{hypercyclic} \cite{HW}, which in the case when $X$ is  a Baire space it is equivalent to
the flow being  topologically transitive.
We consider a strong version of unstability, which is
also called \textit{sensitive dependence on initial conditions}: 
there exists a constant $\eta>0$ such that for each 
point $x\in X$ and for each $\varepsilon>0$ there exist a point 
$y\in B(x,\varepsilon)$ and  $t>0$ such that 
$\rho(S^t(x),S^t(y))>\eta$. 
Here $B(x,r)$ denotes the open ball in $X$ with centre $x$ and radius $r>0$.

A little stronger definition of  chaos was given by
Devaney \cite{Devaney}:
\begin{itemize}
\item[(a')] there exists a dense trajectory,  
\item[(b')] the set of periodic points is dense in $X$.
\end{itemize}
One can check that from (a') and (b') it follows that
each trajectory is unstable.

A semiflow  $\{S^t\}_{t\ge 0}$ is
\textit{topologically mixing} if for
any two non-empty open sets $U$, $V$ of $X$ there exists
$t_0>0$ such that $S^t(U)\cap V\ne\emptyset$ for  $t\ge  t_0$.
The topological mixing  is a strong chaotic property of a transformation
and implies chaos in the sense of  Auslander--Yorke and 
sensitive dependence on initial conditions.

One of the chaotic properties of a semiflow is turbulence.
A trajectory $\mathcal O(x)=\{S^t(x):t\ge 0\}$ of the point $x$
 is \textit{turbulent in the sense of Lasota--Yorke}
\cite{LY},
if its closure 
$\cl (\mathcal O(x))$
 is a compact set  and 
$\cl (\mathcal O(x))$
does not contain periodic points.

\begin{theorem}
\label{turbulentnaLY}
Let $\{S^t\}_{t\ge 0}$ be a semiflow.
If for some nonempty, compact, and disjoint sets $A$ and $B$
we have
\begin{equation}
\label{c:LY}
A\cup B\subset S^{t_0}(A)\cap S^{t_0}(B)\quad\textrm{for  some $t_0>0$,}
\end{equation}
\label{twergodyczne2}
then  there exists a turbulent orbit in the sense of
Lasota--Yorke.
\end{theorem}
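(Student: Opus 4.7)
The approach is a symbolic-dynamics coding by the visits of the discrete-time semiflow $T:=S^{t_0}$ to the sets $A$ and $B$, followed by a careful choice of itinerary whose corresponding orbit avoids all periodic points.

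First, I would exploit the hypothesis to build itinerary points. Since $A\cup B\subset T(A)\cap T(B)$, every $y\in A\cup B$ has a $T$-preimage in each of $A$ and $B$. Given any sequence $\alpha=(\alpha_n)_{n\ge 0}\in\{A,B\}^{\mathbb N}$, I would pick $y_N\in\alpha_N$ and inductively take $y_{n-1}\in\alpha_{n-1}$ with $T(y_{n-1})=y_n$; compactness of $\alpha_0\subset A\cup B$ and continuity of the iterates $T^n$ then yield a cluster point $x_\alpha\in\alpha_0$ satisfying $T^n(x_\alpha)\in\alpha_n$ for every $n\ge 0$. Compactness of the full orbit closure is then automatic: writing $t=nt_0+s$ with $s\in[0,t_0)$ gives $S^t(x_\alpha)=S^s(T^n(x_\alpha))\in S([0,t_0]\times(A\cup B))$, which is compact as a continuous image of a compact set.

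The main step is choosing $\alpha$ so that $\cl(\mathcal O(x_\alpha))$ contains no periodic point. I would take $\alpha$ to be a cube-free sequence over $\{A,B\}$, for instance the Thue--Morse sequence, so that for every $q\ge 1$ no substring of $\alpha$ of length $\ge 3q$ is periodic of period $q$. Suppose for contradiction that some $p\in\cl(\mathcal O(x_\alpha))$ satisfies $S^\tau(p)=p$ for some $\tau>0$. Take $t_k\to\infty$ with $S^{t_k}(x_\alpha)\to p$ and decompose $t_k=n_kt_0+r_k$ with $r_k\in[0,t_0]$; passing to subsequences, let $r_k\to r$ and $T^{n_k}(x_\alpha)\to z$, so that $p=S^r(z)$. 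Since $T^m(x_\alpha)\in A\cup B$ for every $m$, the limit satisfies $T^m(z)\in A\cup B$ for all $m\ge 0$. Periodicity of $p$ traps $S^s(z)$ on the closed loop $\mathcal O(p)$ for $s\ge r$: either $t_0/\tau\in\mathbb Q$ and the $T$-orbit of $z$ is eventually periodic of some period $q$, or $t_0/\tau\notin\mathbb Q$ and that orbit is dense in $\mathcal O(p)$, in which case connectedness of $\mathcal O(p)$ together with disjointness of the compact sets $A$ and $B$ forces $\mathcal O(p)\subset A$ or $\mathcal O(p)\subset B$ (so the coding is constant, i.e.\ periodic of period $1$). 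In either case the binary itinerary $(\pi_m)_{m\ge 1}$ of $z$ is eventually periodic of some period $q$, and continuity of each $T^m$ then gives $\alpha_{n_k+m}=\pi_m$ for every fixed $m$ and all sufficiently large $k$. Hence $\alpha$ contains arbitrarily long substrings periodic of period $q$, contradicting cube-freeness of $\alpha$.

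The main obstacle is the last step, in particular the irrational case $t_0/\tau\notin\mathbb Q$ (where one must invoke connectedness of $\mathcal O(p)$ to rule out the orbit straddling $A$ and $B$) and the case $r>0$ (where $z$ itself need not be periodic under the semiflow, even though its forward $T$-iterates lie on the periodic loop through $p$). Once the itinerary construction is in place, everything else is routine compactness and continuity.
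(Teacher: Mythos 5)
The paper does not actually prove Theorem~\ref{turbulentnaLY}; it only states it and cites the original sources (\cite{LY} for discrete time, \cite{Lasota-turbulencja} for continuous time), so there is no in-paper argument to compare yours against. Your proof is, in substance, a correct self-contained reconstruction of the classical Lasota--Yorke coding argument: the pull-back construction realizing an arbitrary $\{A,B\}$-itinerary under $T=S^{t_0}$ is sound, the compactness of $\cl(\mathcal O(x_\alpha))$ via $S([0,t_0]\times(A\cup B))$ is correct, and the exclusion of periodic points by choosing a cube-free (Thue--Morse) itinerary works. You also correctly identify and resolve the two genuinely continuous-time difficulties: the case $r>0$ (only the forward $T$-iterates of $z$, not $z$ itself, need lie on the periodic loop, and that suffices) and the incommensurable case $t_0/\tau\notin\mathbb Q$ (density of the rotation orbit plus connectedness of $\mathcal O(p)$ inside the disjoint closed union $A\cup B$ forces a constant tail).

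Two small points you should make explicit to close the argument fully. First, the rational/irrational dichotomy should be taken with respect to the \emph{minimal} period; that a non-fixed periodic point of a continuous semiflow has a minimal period follows because the set of its periods, together with $0$, is the nonnegative part of a closed subgroup of $\mathbb R$ (if $S^{s}(p)=S^{s'}(p)=p$ with $s<s'$ then $S^{s'-s}(p)=S^{s'-s}(S^{s}(p))=p$), hence is either $[0,\infty)$ (fixed point) or $\tau_0\mathbb Z_{\ge 0}$. Second, the step transferring the eventually periodic itinerary of $z$ to long periodic factors of $\alpha$ uses that disjoint compact sets in a metric space are at positive distance, so that $T^{n_k+m}(x_\alpha)\to T^m(z)$ forces the labels to agree for large $k$; this is routine but worth a sentence. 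With these remarks your argument is complete and is essentially the proof given in the cited references.
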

Theorem \ref{turbulentnaLY} was proved in
\cite{LY} (a discrete time version) and in \cite{Lasota-turbulencja}
(a continuous time version).

\subsection{Stochastic approach} 
\label{ss:stoch}
We recall some necessary definitions.

By a measure on $X$ we mean any probability measure
defined on the $\sigma$-algebra $\mathcal{B}(X)$ of Borel
subsets of $X$. By $\supp\mu$ we denote the topological support of
the measure $\mu$.

A measure $\mu$ is called \textit{invariant} under
a semiflow $\{S^t\}_{t\ge 0}$, if
$\mu(A)=\mu(S^{-t}(A))$ for each $t\ge 0$ and each
Borel subset~$A$.  Here  $S^{-t}(A):=(S^t)^{-1}(A)$.
We will denote a semiflow
$\{S^t\}_{t\ge 0}$ with an invariant measure $\mu$ by
$(S,\mu)$. 

The semiflow $(S,\mu)$
is called \textit{ergodic} if
for each $\mu$-integrable function $f\colon X\to \mathbb R$
we have
\begin{equation}
\label{twergodyczne}
\lim_{T\to\infty}\frac1T\int_0^T f(S^t(x))\,dt=\int_Xf(x)\,\mu(dx)
\quad \text{for  $\mu$-a.e. $x$}.
\end{equation}
If we substitute $f=\mathbf{1}_A$ in equation (\ref{twergodyczne}),
then the left-hand side of (\ref{twergodyczne})
is the mean time of visiting
the set $A$ and
the right-hand side of (\ref{twergodyczne})
is  $\mu(A)$.

A semiflow $(S,\mu)$
is called \textit{mixing} if
\begin{equation}
\label{mixing}
\lim_{t\to\infty} \mu (S^{-t} (A) \cap B)=\mu (A)\mu (B)
\end{equation}
for all $A,B\in\mathcal \mathcal{B}(X)$.
If ${\rm P}=\mu$  and ${\rm P}(B)>0$ then 
condition (\ref{mixing}) can be written in the following way
\[   
\lim_{t\to\infty} {\rm P}(S^t(x)\in A|x\in B)=\mu(A)\quad
\text{for  all $A \in \mathcal{B}(X)$},
\]
which means that 
the trajectory of almost all points enters 
a set $A$ with  asymptotic probability $\mu(A)$.

The  semiflow $(S,\mu)$
is \textit{exact}
if for each measurable set $A$ with
$\mu(A)>0$ we have
\begin{equation}
\label{exactness}
\lim_{t\to\infty}\mu(S^t(A))=1.
\end{equation}
We have: exactness $\Rightarrow $ mixing $\Rightarrow $ ergodicity.

We now show how ergodic properties of a  semiflow
imply its chaotic behaviour. 
\begin{proposition}
\label{mixingchaos}
Let $X$ be a separable metric space and $\mu$ be a probability Borel measure on $X$ such that $\,\supp\mu=X$.
If a  semiflow $(S,\mu)$ is ergodic, then  the orbit of $\mu$--a.e. point
$x$ is  dense in $X$.
If a  semiflow $(S,\mu)$ is mixing,
then the semiflow
$\{S^t\}_{t\ge 0}$
is topologically mixing, in particular $(S,\mu)$ has the property of sensitive dependence on initial conditions.
\end{proposition}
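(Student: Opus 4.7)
For the first statement, I would exploit the separability of $X$ to pick a countable base $\{U_n\}_{n\geq 1}$ of nonempty open sets. Since $\supp\mu=X$, every $U_n$ satisfies $\mu(U_n)>0$. Applying the ergodic identity (\ref{twergodyczne}) to the indicator $f=\mathbf{1}_{U_n}$, I obtain a Borel set $A_n$ with $\mu(A_n)=1$ on which the time average equals $\mu(U_n)>0$; in particular the orbit of every $x\in A_n$ must visit $U_n$. The intersection $A=\bigcap_n A_n$ still has full measure, and every $x\in A$ has an orbit meeting each element of the base, which makes the orbit dense in $X$.

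For the topological mixing claim, I would fix any two nonempty open sets $U,V\subset X$ and, using $\supp\mu=X$, note that $\mu(U)>0$ and $\mu(V)>0$. Applying (\ref{mixing}) with $A=V$, $B=U$ gives
\[
\lim_{t\to\infty}\mu(S^{-t}(V)\cap U)=\mu(V)\mu(U)>0,
\]
so there is $t_0>0$ with $\mu(S^{-t}(V)\cap U)>0$, and in particular $S^{-t}(V)\cap U\neq\emptyset$, for every $t\geq t_0$. Any witness $y$ lies in $U$ with $S^t(y)\in V$, yielding $S^t(U)\cap V\neq\emptyset$.

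For the sensitive-dependence conclusion, I may assume $X$ contains at least two points (otherwise there is nothing to prove), fix $p\neq q$ in $X$, and set $\eta=\rho(p,q)/6$. Given any $x\in X$ and $\varepsilon>0$, I would apply topological mixing to the pairs $(B(x,\varepsilon),B(p,\eta))$ and $(B(x,\varepsilon),B(q,\eta))$ to obtain a single time $t>0$ and points $y_1,y_2\in B(x,\varepsilon)$ with $S^t(y_1)\in B(p,\eta)$ and $S^t(y_2)\in B(q,\eta)$. The triangle inequality then gives $\rho(S^t(y_1),S^t(y_2))\geq 4\eta$, and one more triangle-inequality application to the triple $S^t(y_1),S^t(x),S^t(y_2)$ shows that at least one of $y_1,y_2$ provides the required $y\in B(x,\varepsilon)$ with $\rho(S^t(x),S^t(y))>\eta$.

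The step that requires the most care is the first: although (\ref{twergodyczne}) is stated for $\mu$-integrable $f$, in the continuous-time setting one must invoke the continuous-time Birkhoff ergodic theorem and organise the exceptional null sets so that the countable intersection $\bigcap_n A_n$ still has full measure. Once that bookkeeping is in place, the remaining two parts are purely topological and follow directly from the hypothesis $\supp\mu=X$ combined with the defining identities of mixing and topological mixing.
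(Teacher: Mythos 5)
Your argument is correct and is the standard one; the paper states Proposition~\ref{mixingchaos} without proof (deferring to the surveys \cite{R-chaos-p,Rudnicki-er-chaos}), and your three steps --- the ergodic identity applied to indicators of a countable base to get density of $\mu$-a.e.\ orbit, the mixing identity applied to two open sets of positive measure to get topological mixing, and the synchronized-times trick with two target balls to get sensitivity --- are exactly the expected route, with the bookkeeping of null sets and the choice $\eta=\rho(p,q)/6$ both handled correctly. The only cosmetic caveat is that the sensitivity conclusion tacitly requires $X$ to contain more than one point, which you rightly set aside as degenerate.
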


Sometimes the combination of the methods of ergodic theory and dynamical systems allows us to obtain stronger properties of the considered systems. We combine the following well known
Krylov--Bogoliubov theorem with condition \eqref{c:LY}.
\begin{theorem}
\label{miaraKB}  
Let $\{S^t\}_{t\ge 0}$ be a semiflow
on a compact metric space. Then there exists a probability Borel 
measure 
$\mu$  invariant and  ergodic with respect to~$\{S^t\}_{t\ge 0}$.
\end{theorem}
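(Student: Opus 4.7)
The plan is to proceed in two stages: first produce some invariant probability measure by the classical Krylov--Bogoliubov averaging, then extract an ergodic one via Krein--Milman.

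For the existence of an invariant measure, fix an arbitrary point $x_0\in X$ and, for each $T>0$, define a Borel probability measure $\mu_T$ on $X$ by
\[
\int_X f\,d\mu_T=\frac1T\int_0^T f(S^t(x_0))\,dt\qquad\text{for }f\in C(X).
\]
The integrand is continuous in $t$ by property (b) of a semiflow, so this is a well-defined positive linear functional of norm one on $C(X)$, and Riesz representation identifies it with a probability measure. Since $X$ is compact metric, $C(X)$ is separable and the set $\mathcal{P}(X)$ of Borel probability measures is weak-$*$ sequentially compact (Banach--Alaoglu plus Riesz). Pick a subsequence $T_n\to\infty$ with $\mu_{T_n}\to\mu$ weakly. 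Then for any fixed $s\ge 0$ and any $f\in C(X)$,
\[
\int f\circ S^s\,d\mu_T-\int f\,d\mu_T=\frac1T\left(\int_T^{T+s}f(S^t(x_0))\,dt-\int_0^s f(S^t(x_0))\,dt\right),
\]
which is bounded in absolute value by $2s\|f\|_\infty/T$ and tends to $0$. Passing to the limit along $T_n$ yields $\int f\circ S^s\,d\mu=\int f\,d\mu$ for every $f\in C(X)$, hence $\mu$ is invariant under each $S^s$.

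For ergodicity, let $\mathcal{M}$ be the set of all invariant Borel probability measures. By the previous step $\mathcal{M}\ne\emptyset$; it is convex; and it is weak-$*$ closed (the defining relations $\int f\circ S^s\,d\nu=\int f\,d\nu$ for $f\in C(X)$, $s\ge 0$ survive weak-$*$ limits), hence weak-$*$ compact. Since $C(X)^*$ with the weak-$*$ topology is a locally convex Hausdorff space, the Krein--Milman theorem gives an extreme point $\mu_0\in\mathcal{M}$. The key structural fact is that extreme points of $\mathcal{M}$ are exactly the ergodic invariant measures: if $\mu_0$ were not ergodic, there would be an invariant Borel set $A$ (i.e.\ $\mu_0(S^{-t}(A)\bigtriangleup A)=0$ for all $t\ge 0$) with $0<\mu_0(A)<1$, and the convex decomposition
\[
\mu_0=\mu_0(A)\cdot\mu_0(\,\cdot\mid A)+\mu_0(X\setminus A)\cdot\mu_0(\,\cdot\mid X\setminus A),
\]
whose two components lie in $\mathcal{M}$ and are distinct from $\mu_0$, would contradict extremality.

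The routine parts are the weak-$*$ compactness and the telescoping estimate giving invariance. The main obstacle is the last step: to legitimately equate extreme points with ergodic measures one must verify the equivalence between the time-average definition \eqref{twergodyczne} of ergodicity and the indecomposability condition used above. In continuous time this requires the Birkhoff--Khinchin ergodic theorem applied to the family $\{S^t\}_{t\ge 0}$ (or to a generating discrete-time map $S^1$ together with a short argument reducing the continuous-time averages to the discrete ones via continuity of $t\mapsto f(S^t(x))$), showing that $\mu_0$-a.e.\ point has orbit-average equal to $\int f\,d\mu_0$, which in turn forces every invariant set to have measure $0$ or $1$.
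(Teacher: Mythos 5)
Your argument is correct, and it is the standard textbook proof of the Krylov--Bogoliubov theorem together with the extreme-point characterization of ergodic measures; the paper itself offers no proof, citing the result as well known, so there is no authorial argument to compare against. Both halves of your proof are sound: the telescoping estimate for invariance of the weak-$*$ limit is right, the Krein--Milman step is right, and you correctly identify the only genuinely nontrivial bridge, namely that the paper's time-average definition \eqref{twergodyczne} of ergodicity must be reconciled with the indecomposability property delivered by extremality, which is exactly the content of the Birkhoff--Khinchin theorem (applicable here since property (b) of a semiflow gives joint measurability of $(t,x)\mapsto S^t(x)$). Nothing further is needed.
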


This theorem is a nice general result, but if the semiflow has 
a periodic point $x$, this result is trivial, because we find an invariant ergodic measure supported on the orbit of $x$.      
To formulate a stronger version of Theorem~\ref{miaraKB} we need 
the following definition. 
A measure $\mu$ is said to be \textit{continuous}
if $\mu(\mathrm{Per})=0$, where
Per is the set of all periodic points of the semiflow $\{S^t\}_{t\ge0}$.
\begin{theorem}
\label{miaraKB-w}
Let $\{S^t\}_{t\ge 0}$ be a semiflow
on a compact metric space.  If there are
nonempty, compact, and disjoint sets $A$ and $B$
satisfying condition  $(\ref{c:LY})$,
then there exists a continuous probability Borel
measure
$\mu$  invariant and  ergodic with respect to~$\{S^t\}_{t\ge 0}$.
\end{theorem}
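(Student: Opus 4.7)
The plan is to combine Theorem~\ref{turbulentnaLY} with Theorem~\ref{miaraKB} by applying Krylov--Bogoliubov not to the whole space $X$, but to the closure of a turbulent orbit, where periodic points are absent by construction.

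First, since the hypothesis $(\ref{c:LY})$ holds, Theorem~\ref{turbulentnaLY} furnishes a point $x_0\in X$ whose orbit is turbulent in the sense of Lasota--Yorke. Set $K:=\cl(\mathcal O(x_0))$. By definition, $K$ is compact and $K\cap\mathrm{Per}=\emptyset$.

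Next, I would check that $K$ is forward invariant and carries a restricted continuous semiflow. For any $t\ge 0$, continuity of $S^t$ gives $S^t(K)=S^t(\cl(\mathcal O(x_0)))\subset\cl(S^t(\mathcal O(x_0)))\subset\cl(\mathcal O(x_0))=K$, so $\{S^t|_K\}_{t\ge 0}$ is a well-defined continuous semiflow on the compact metric space $K$.

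Then I would apply Theorem~\ref{miaraKB} to this restricted semiflow, obtaining a Borel probability measure $\mu$ on $K$ that is invariant and ergodic with respect to $\{S^t|_K\}_{t\ge 0}$. Extending it to $X$ by the formula $\tilde\mu(E):=\mu(E\cap K)$ yields a Borel probability measure on $X$; forward invariance of $K$ makes $\tilde\mu$ invariant and ergodic for $\{S^t\}_{t\ge 0}$ on $X$. Finally, any periodic point of the restricted semiflow is trivially a periodic point of the original semiflow, so $\mathrm{Per}\cap K=\emptyset$; hence $\tilde\mu(\mathrm{Per})=\mu(\mathrm{Per}\cap K)=0$, which is exactly the continuity of $\tilde\mu$.

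The only point requiring care is the passage from ``$K$ contains no periodic points'' to ``$\tilde\mu(\mathrm{Per})=0$''; the trick is that we never need to bound $\mu$ on a potentially complicated set $\mathrm{Per}\subset X$, because the support of $\tilde\mu$ is already confined to $K$, which is disjoint from $\mathrm{Per}$. Thus the main conceptual step is the one provided by Theorem~\ref{turbulentnaLY}: condition $(\ref{c:LY})$ is strong enough to produce a compact invariant set avoiding all periodic points, and once such a set is available the continuous ergodic measure is essentially free.
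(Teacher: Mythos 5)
Your proposal is correct and follows essentially the same route as the paper: obtain a Lasota--Yorke turbulent orbit from Theorem~\ref{turbulentnaLY}, restrict the semiflow to the (compact, periodic-point-free) closure of that orbit, apply the Krylov--Bogoliubov theorem there, and observe that the resulting invariant ergodic measure is automatically continuous. The paper states this more tersely; your additional verifications (forward invariance of the closure and the extension of the measure to $X$) are exactly the details the paper leaves implicit.
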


\begin{proof}
From Theorem \ref{turbulentnaLY} it follows the existence of an
 orbit $\mathcal O(x)$  turbulent
in the sense of Lasota--Yorke. We now consider the
 semiflow restricted to the orbit $\mathcal O(x)$.
By  Theorem~\ref{miaraKB}
there exists a probability invariant ergodic measure $\mu$ supported on
$\mathcal O(x)$. Since the orbit $\mathcal O(x)$
has no periodic points the measure $\mu$ is continuous.
\end{proof}
 
\section{Frobenius--Perron operator and ergodic properties}
\label{s:F-P-op}
\subsection{Introduction}
\label{ss:F-P-intro}
Ergodic properties of semiflows can be successfully investigated using methods of operator theory~\cite{LM}. 
We now introduce the notion of Frobenius--Perron operator
and show how ergodic properties of flows
can be expressed in the language of 
Frobenius--Perron operators.

Let $(X,\Sigma,m)$ be a $\sigma$-finite measure space.
A measurable map $S\colon X\to X$  
is called \textit{non-singular}
if it satisfies the following condition 
\begin{equation}
\label{trana2}
m(A)=0 \Longrightarrow  m(S^{-1}(A))=0\textrm{ for $A\in \Sigma$.}
\end{equation}
Let $L^1=L^1(X,\Sigma,m)$ and let $S$ be a nonsingular 
map of $X$.
An operator  $P_S\colon  L^1\to L^1$
which satisfies the following condition
\begin{equation}
\label{def-FP}
\int_A P_S f(x)\,m(dx)=
\int_{S^{-1}(A)} f(x)\,m(dx)
\textrm{\,  for $A\in \Sigma$ and $f\in L^1$}
\end{equation}
is called the
{\it Frobenius--Perron operator} for the transformation~$S$. 
The operator $P_S$ is linear, \textit{positive}\index{positive!operator} 
(if $f\ge 0$ then $P_Sf \ge 0$) and preserves the integral    
($\int_XP_Sf\,dm=\int_Xf\,dm $).
The adjoint of the Frobenius--Perron operator
$P_S^*\colon L^{\infty}\to L^{\infty}$ is 
given by $P_S^*g(x)= g(S(x))$.

Denote by  $D$ the set of all densities with respect to $m$,
i.e. functions $f\in L^1(X,\Sigma,m)$ such that $f\ge 0$ and $\|f\|=1$.
Let    $S\colon X\to X$ be a nonsingular transformation 
and let $f^*\in D$. Then the measure 
$\mu(A)=\int_Af^*\,dm$ for $A\in \Sigma$,
 is invariant with respect to $S$  
if and only if $P_Sf^*=f^*$.

Now we consider a family of transformations  $\{S^t\}_{t\in \mathcal T}$, where $\mathcal T=[0,\infty)$ or $\mathcal T=\mathbb N$.
If the map
$S\colon \mathcal T\times X\to X$ is measurable 
 and each transformation $S^t$ is nonsingular, then  we denote by $P^t$  the Frobenius--Perron operator corresponding to $S^t$.
Then  the quadruple $(X,\Sigma,\mu,S^t)$ 
is a measure-preserving dynamical system
if and only if $P^t\!f^*=f^*$ for all $t\in \mathcal T$.
We collect the relations between ergodic properties of dynamical systems 
$(X,\Sigma,\mu,S^t)$ and the behavior of the 
Frobenius--Perron operators $\{P^t\}$ in Table~\ref{tab1}.

\begin{table}[h]
%\caption{}
%\label{tab1}
\begin{center}
\begin{tabular}{|c|c|}
\hline
$\mu$ & $f^*$
\\
\hline
invariant&$P^t\!f^*=f^*$ for all $t\in \mathcal T$\\
ergodic&$f^*$ is a unique fixed point  in $D$ of all $P^t$ \\
mixing& w-$\lim_{t\to\infty} P^t\!f=f^*$ for every $f\in D$\\
exact & $\lim_{t\to\infty} P^t\!f=f^*$ for every $f\in D$\\
\hline
\end{tabular}
\end{center}
\vskip2mm
\caption{${}$}
\label{tab1}
\end{table} 

We recall that the \textit{weak limit}\index{weak limit}  w-$\lim_{t\to\infty} P^t\!f$ is a function $h\in L^1$  such that for 
every $g\in L^{\infty}$ we have 
\[
\lim_{t\to\infty}\int_X  P^t\!f(x)g(x)\,m(dx)=\int_X  h(x)g(x)\,m(dx).
\]

\subsection{Lasota--Yorke lower function theorem}
\label{ss:lower}
A. Lasota and J.A. Yorke have developed tools to study the convergence of Frobenius--Perron operators and there applications to expanding maps~\cite{LY-TAMS1}. 
One of such tools is the 
\textit{lower function theorem}~\cite{LY-TAMS2}. 
This theorem can be applied to a broader class of operators called Markov or stochastic operators, so we will start with a definition of them.

Let $(X,\Sigma,m)$ be a $\sigma$-finite measure space.
A linear operator $P\colon  L^1\to L^1$ is called a
\textit{Markov} (or \textit{stochastic}) \textit{operator} 
 if $P(D)\subset D$. 
 
The family $\{P^t\}_{t\ge0}$ of Markov operators is called a 
\textit{stochastic semigroup}
if it satisfies the following conditions:
\begin{itemize}
\item[(a)] \  $P(0)=I$,  i.e., $P(0)f =f$,
\item[(b)] \ $P^{t+s}=P^t P^s\quad \textrm{for}\quad
s,\,t\ge 0$,
\item[(c)] \ for each $f\in L^1$ the function
$t\mapsto P^tf$ is continuous.
\end{itemize}
We also consider a discrete time stochastic semigroup defined as the iterates of a Markov operator.

Markov operators  appear in ergodic theory of dynamical systems and iterated function systems. They also describe the evolution of Markov chains.
Stochastic semigroups  describe the evolution of densities of  distributions of Markov processes like diffusion processes, piecewise deterministic processes and hybrid stochastic processes.

Consider a stochastic semigroup 
$\{P^t\}_{t\ge0}$.
A density $f^*$
is called \textit{invariant\,}\index{invariant density}
 if $P^tf^*=f^*$ for each $t>0$.
The stochastic semigroup
$\{P^t\}_{t\ge 0}$
is called {\it asymptotically stable\,}\index{asymptotically stable stochastic semigroup}
 if there is an invariant density
$f^*$ such that
\[
\lim _{t\to\infty}\|P^tf-f^*\|=0 \quad \text{for}\quad
f\in D.
\]

 A function $h\in L^1$,  $h\ge 0$ and $h\ne 0$, is called 
a \textit{lower function}
for a stochastic semigroup 
$\{P^t\}_{t\ge 0}$  if 
\[
P^tf(x)\ge h(x)-\varepsilon_t(x)\quad\text{and}\quad 
\|\varepsilon_t\|\to 0
\] 
for  each density  $f$.
This condition  can be equivalently written as: 
\[
\lim_{t\to\infty}\|(P^tf-h)^-\|=0\quad\textrm{for every $f\in D$.}
\]
Observe that if the semigroup  is asymptotically stable
then its invariant density $f^*$ is a lower function for it.
Lasota and Yorke  \cite{LY-TAMS2} proved the following converse result.
\begin{theorem}
\label{LYt} 
If there exists  a lower function for a  stochastic semigroup $\{P^t\}_{t\ge 0}$
then this semigroup is asymptotically stable.
\end{theorem}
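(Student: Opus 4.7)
The plan is threefold. First, I would show that $D(t):=\|P^{t}f-P^{t}g\|_{1}$ is nonincreasing in $t$ for any two densities $f,g$, and therefore has a limit $\alpha(f,g)$. Second, I would use the lower function in a contractive estimate to force $\alpha(f,g)=0$. Third, I would extract a common invariant density $f^{*}$, after which $\|P^{t}f-f^{*}\|_{1}=\|P^{t}f-P^{t}f^{*}\|_{1}\to 0$ is immediate from the second step. The first step is routine: integrating $P^{t}f\ge h-\varepsilon_{t}$ and using $\|P^{t}f\|_{1}=1$ gives $c:=\|h\|_{1}\in(0,1]$, and the fact that positive integral-preserving operators are $L^{1}$-contractions makes $D$ nonincreasing.

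The decisive calculation is the following. Hahn-decompose $P^{t}f-P^{t}g=w_{t}^{+}-w_{t}^{-}$, so that $w_{t}^{\pm}\ge 0$ have disjoint supports and $\|w_{t}^{\pm}\|_{1}=D(t)/2$. Using $\|a-b\|_{1}=\|a\|_{1}+\|b\|_{1}-2\|a\wedge b\|_{1}$ for $a,b\ge 0$,
\[
D(t+s)=D(t)-2\bigl\|P^{s}w_{t}^{+}\wedge P^{s}w_{t}^{-}\bigr\|_{1}.
\]
Applying the lower-function hypothesis to the normalized densities $w_{t}^{\pm}/\|w_{t}^{\pm}\|_{1}$ gives $P^{s}w_{t}^{\pm}\ge \tfrac{1}{2}D(t)(h-\eta_{s}^{\pm})$ with $\|\eta_{s}^{\pm}\|_{1}\to 0$ as $s\to\infty$, and combining these with the identity yields
\[
D(t+s)\le D(t)\bigl(1-c+\|\eta_{s}^{+}\|_{1}+\|\eta_{s}^{-}\|_{1}\bigr).
\]
Freezing $t$ and letting $s\to\infty$ gives $\alpha(f,g)\le(1-c)D(t)$; letting $t\to\infty$ then gives $\alpha(f,g)\le(1-c)\alpha(f,g)$, forcing $\alpha(f,g)=0$ since $c>0$.

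To produce $f^{*}$, I would pass to the Ces\`aro means $A_{T}f:=\tfrac{1}{T}\int_{0}^{T}P^{t}f\,dt$; these are densities with $A_{T}f\ge h-o(1)$ in $L^{1}$ and $\|P^{s}A_{T}f-A_{T}f\|_{1}\le 2s/T\to 0$. Any $L^{1}$-weak cluster point $f^{*}$ of $\{A_{T}f\}$ is a $P^{s}$-invariant density with $f^{*}\ge h$; applying the main inequality to the pair $(f,f^{*})$ then gives $\|P^{t}f-f^{*}\|_{1}\to 0$, and $\alpha(f_{1},f_{2})=0$ makes $f^{*}$ independent of the starting density.

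The principal obstacle is the dependence of the errors $\eta_{s}^{\pm}$ on both $t$ and the specific densities $w_{t}^{\pm}/\|w_{t}^{\pm}\|_{1}$: the hypothesis supplies only pointwise $L^{1}$-decay of $\|\eta_{s}^{\pm}\|_{1}$, not decay uniform over $D$, which is precisely why the ordering of limits (freeze $t$, send $s\to\infty$ first, only afterwards send $t\to\infty$) is essential. Extracting the weak cluster point of $\{A_{T}f\}$ in the abstract $\sigma$-finite setting is a secondary, standard technicality, handled by a Dunford--Pettis/uniform-integrability argument bootstrapped from the common lower bound $A_{T}f\ge h-o(1)$.
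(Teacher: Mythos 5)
Your first two steps are correct and coincide with the original Lasota--Yorke argument (the paper states Theorem~\ref{LYt} without proof, citing \cite{LY-TAMS2}): the monotonicity of $D(t)$ from the $L^1$-contractivity of Markov operators, the identity $\|a-b\|=\|a\|+\|b\|-2\|a\wedge b\|$ applied to the normalized halves of the Hahn decomposition, and the careful ordering of the limits (first $s\to\infty$ with $t$ frozen, then $t\to\infty$) all check out and give $\lim_{t\to\infty}\|P^tf-P^tg\|=0$ for all densities $f,g$.

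The gap is in the production of $f^*$, and it is not a ``secondary, standard technicality'' but the hard half of the theorem. A weak cluster point of $\{A_Tf\}$ in $L^1$ requires relative weak compactness, i.e.\ by Dunford--Pettis uniform integrability (plus tightness on a $\sigma$-finite space), and neither follows from the lower bound $A_Tf\ge h-o(1)$: a lower bound by a fixed integrable function pins down only the mass $c=\|h\|$ lying under $h$, while the remaining mass $1-c$ is free to concentrate on sets of vanishing measure or to escape to infinity --- exactly what uniform integrability must exclude. (If $c=1$ there is nothing to prove, since then $A_Tf\to h$ in norm; the theorem is nontrivial precisely when $c<1$, which is where your justification fails.) Nor can Step~2 alone supply the missing compactness: the heat semigroup on $L^1(\mathbb R)$ satisfies $\|P^tf-P^tg\|\to0$ for all densities yet has no invariant density, so the lower function must re-enter quantitatively. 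Lasota and Yorke close this gap by a genuine bootstrap: a lower function of norm $c>0$ can be upgraded to lower functions of norm arbitrarily close to $1$, after which the estimate $\sup_{\tau\ge 0}\|P^{t+\tau}f-P^{t}f\|\le 2-2\|h\|+o(1)$ (which uses that the error of the density $P^{\tau}f$ at time $s$ equals the error of $f$ at time $s+\tau$, hence is controlled uniformly in $\tau$ by a tail supremum) forces $(P^tf)$ to be norm-Cauchy. Alternatively one can take a weak-$*$ cluster point of $A_Tf$ in $(L^\infty)^*$ and pass to the countably additive part of the resulting invariant positive functional via the Yosida--Hewitt decomposition (it dominates $h$, hence is nonzero, and its density is subinvariant with preserved norm, hence invariant); either way, an additional idea of this kind is unavoidable, and your sketch does not contain one.
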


\subsection{An generalization of the lower function theorem}
\label{ss:g-lower}
G. Pianigiani and J.A. Yorke~\cite{PianigianiYorke}
considered the following problem.
Let $X$ be a compact subset  of
$\mathbb R^n$ and
let
$S\colon X\to \mathbb R^n$ be 
a piecewise expanding map.
We consider the sequence of iterates 
$x,S(x),\dots,S^t(x)$ 
of points from $X$
as long as they still remain in the set $X$. 
If $S^t(x)\notin X$, then we lose the orbit of $x$.
We consider the following problem.
If the initial distribution  
of points is given by a probability measure $\mu_0$,
what is the conditional probability 
$\mu_t$ that $S^t(x)\in A$ 
given that $x,S(x),\dots,S^t(x)$ are in $A$? 
Such a problem can be called a \textit{discrete time billiard with holes},
because we can treat $X$ as a table with holes and 
the transformation $S$ describes the motion of a ball 
which  can fall through some hole.

We can  define the Frobenius--Perron
operator $P_S$  for the map $S$.
The operator $P_S$ is positive,
 but since the set $S(X)$ is not containing in $X$,
the operator $P_S$ does not preserve the integral.  
If $\mu_0$ has a density $f$, then $\mu_t$ has density 
$P_S^tf/\|P_S^tf\|$ and $\|P_S^tf\|$ is the fraction of points
such that the sequence $x,S(x),\dots,S^t(x)$ 
lies in the set $A$.

Since Theorem~\ref{LYt} was useful to study asymptotic stability  
of Frobenius--Perron operators of  piecewise expanding maps 
$S\colon X\to X$, 
we hoped that it is possible to find a version of this theorem 
which can be successfully applied to billiards with holes.   
The problem was solved in~\cite{Rudnicki-C(X)}, 
but instead of operators on the space $L^1$ we consider operators on $C(X)$. The result is the following.
\begin{theorem}
\label{RR-lf}
Let $X$ be a compact Hausdorff space,
$P$ be a positive operator on $C(X)$,   
$F$ be a dense subset of $C_+ (X)=\{f\in C(X)\colon \min f>0$\}, and $\alpha>0$.
We assume that for each $f\in F$ there exists a constant $n_0(f)$
such that 
\[
P^n f / \| P^n f \| \ge \alpha \quad \text{for} \quad
n\ge n_0 (f).
\]
We also assume that for some  $g\in C_+ (X)$, the sequence  
$\{P^n g /\| P^n g \| \}$ 
has a weakly convergent subsequence.
Then there exist a probability measure  $\nu$, $\lambda > 0$, and a function $f^*\in C_+(X)$ such that $Pf^* = \lambda f^*$, $\nu(f^*)=1$ and
\[
\lim_{n\to\infty} \| \lambda^{-n} P^n f-f^* \int_Xf(x)\,\nu (dx) \| = 0 
\text{\ for each\ }  f\in C (X).
\]
\end{theorem}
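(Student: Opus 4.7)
My plan is to decompose the theorem into three stages: extract the eigenfunction $f^* \in C_+(X)$ and eigenvalue $\lambda$; construct the eigenmeasure $\nu$ for $P^*$; and upgrade these to the stated norm convergence via a Doob-type transformation that reduces the problem to an asymptotic stability statement in the spirit of Theorem~\ref{LYt}.

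For the first stage, let $u_n:=P^n g/\|P^n g\|$ for the given $g\in C_+(X)$; by hypothesis a subsequence $u_{n_k}$ converges weakly (in the duality $\langle C(X),M(X)\rangle$) to some $f^*\in C(X)$, and testing against Dirac measures yields pointwise convergence. Using the density of $F$ in $C_+(X)$, I approximate $g$ by elements of $F$ and use positivity of $P$ to transfer the lower bound $P^n f/\|P^n f\|\ge\alpha$ to the sequence $u_n$ itself for large $n$; pointwise passage to the limit then forces $f^*\ge\alpha>0$, so $f^*\in C_+(X)$. Setting $\lambda_n:=\|P^{n+1}g\|/\|P^n g\|\le\|P\|$ and passing to a further subsequence so that $\lambda_{n_k}\to\lambda$ and $u_{n_k+1}\to\tilde f$ weakly, the identity $Pu_{n_k}=\lambda_{n_k}u_{n_k+1}$ combined with the weak continuity of $P$ gives $Pf^*=\lambda\tilde f$. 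Uniqueness of the positive eigenray (see the last paragraph) forces $\tilde f=f^*$, whence $Pf^*=\lambda f^*$ with $\lambda>0$ (since $Pf^*\ge\alpha P\mathbf 1\not\equiv 0$, as $\nu(P\mathbf 1)>0$ for the measure built in the next step).

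For the second stage I apply the Schauder--Tychonoff theorem to the continuous map $\mu\mapsto P^*\mu/(P^*\mu)(\mathbf 1)$ on the weak-$*$ compact convex set of probability measures on $X$, yielding a fixed point $\nu$ with $P^*\nu=\kappa\nu$ for some $\kappa>0$. Pairing with $f^*$, one has $\kappa\,\nu(f^*)=(P^*\nu)(f^*)=\nu(Pf^*)=\lambda\,\nu(f^*)$, and because $\nu(f^*)\ge\alpha>0$, this forces $\kappa=\lambda$; rescaling $\nu$ so that $\nu(f^*)=1$ completes the eigenpair. For the third stage I introduce the Doob transform $\tilde Pf:=\lambda^{-1}(f^*)^{-1}P(f^*f)$, which is positive and satisfies $\tilde P\mathbf 1=\mathbf 1$, hence is a Markov operator on $C(X)$ with invariant probability $d\tilde\nu=f^*\,d\nu$. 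The lower-bound hypothesis transfers to $\tilde P^n h\ge\tilde\alpha$ for $h$ in a dense subset of $C_+(X)$, which is the $C(X)$-analog of the Lasota--Yorke lower function condition; the proof of Theorem~\ref{LYt} can be adapted in this setting (as in~\cite{Rudnicki-C(X)}) to yield $\|\tilde P^n f-\tilde\nu(f)\mathbf 1\|\to 0$ uniformly for $f\in C(X)$. Translating back through the Doob transform gives exactly $\|\lambda^{-n}P^n f-f^*\nu(f)\|\to 0$.

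The main obstacle I expect is the passage from the weak (essentially pointwise) information provided by the hypothesis to the uniform convergence claimed in the conclusion, together with the uniqueness of the positive eigenfunction used implicitly to identify $\tilde f=f^*$ above. Both points rest on the quantitative strict positivity encoded in the bound $P^n f/\|P^n f\|\ge\alpha$: it forces every normalized iterate of an element of $F$ into a set of bounded Hilbert projective diameter in the cone $C_+(X)$, so that Birkhoff's contraction theorem makes $P$ a strict contraction on the cone after normalization. Turning this cone contraction into the uniform asymptotic stability of the Markov operator $\tilde P$ — without the $L^1$ integration machinery available in~\cite{LY-TAMS2} — is the technical heart of the argument.
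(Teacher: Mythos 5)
First, a caveat: the paper states Theorem~\ref{RR-lf} without proof, citing~\cite{Rudnicki-C(X)}, so there is no in-text argument to compare against; I am judging your proposal on its own terms. Your overall architecture (extract a positive eigenpair $(\lambda,f^*)$, build the eigenmeasure $\nu$ by a fixed-point argument on $P^*$, then Doob-transform $\tilde Pf=\lambda^{-1}(f^*)^{-1}P(f^*f)$ to reduce to asymptotic stability of a Markov operator on $C(X)$) is sensible and consistent with the lower-function philosophy of Theorem~\ref{LYt}. The transfer of the bound $P^nf/\|P^nf\|\ge\alpha$ from $F$ to all of $C_+(X)$ by sandwiching $c_1g\le f\le c_2g$ is fine, as is the identification $\kappa=\lambda$ once the eigenpair exists.

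However, there are genuine gaps at the two central points. (1) In stage one, you cannot pass to a further subsequence with $u_{n_k+1}\to\tilde f$ weakly: bounded sets in $C(X)$ are not weakly sequentially compact, so no such subsequence need exist. If instead you define $\tilde f$ via weak-to-weak continuity of $P$, then $\tilde f=\lambda^{-1}Pf^*$ by construction and the identity $Pf^*=\lambda\tilde f$ is a tautology carrying no information. Moreover, ``uniqueness of the positive eigenray'' cannot be invoked to force $\tilde f=f^*$, because $\tilde f$ is not known to be an eigenvector of anything; what is actually needed is that the \emph{whole} normalized sequence $u_n$ converges (equivalently, that the Hilbert projective distance $d_H(u_n,u_{n+1})\to0$), and that is exactly the Birkhoff-contraction step you defer. (2) In stage three, the $C(X)$-analogue of Theorem~\ref{LYt} --- that a Markov operator on $C(X)$ with a uniform asymptotic lower bound $\tilde P^nf\ge\tilde\alpha$ on a dense set of normalized positive $f$ is asymptotically stable in the sup norm --- is precisely the nontrivial content of~\cite{Rudnicki-C(X)}; you assert it can ``be adapted'' but do not carry out the adaptation (the $L^1$ proof of Lasota--Yorke uses integral preservation and the lattice structure of $L^1$, and converting $L^1(\tilde\nu)$-decay back into sup-norm decay requires an extra argument using the lower bound). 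Your closing paragraph explicitly labels both of these as ``the main obstacle'' and ``the technical heart,'' so by your own account the proof is an outline rather than a proof. A minor additional point: the Schauder--Tychonoff map $\mu\mapsto P^*\mu/(P^*\mu)(\mathbf 1)$ is only well defined once one knows $P\mathbf 1>0$ everywhere, which you obtain from $\mathbf 1\ge f^*/\|f^*\|$ only after the eigenfunction is in hand --- worth stating, since $P$ positive alone does not give it.
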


\begin{figure}[h]
\begin{center}
\includegraphics[height=5cm]{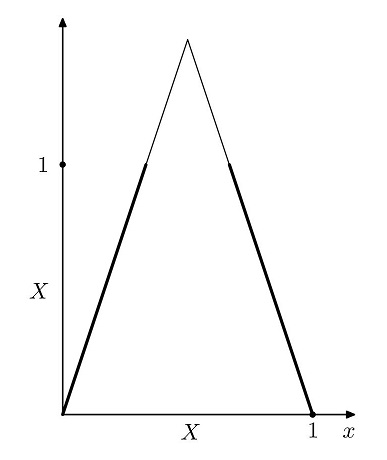}
\end{center}
\vskip-5mm
\caption{}
\label{r:zjazd1}
\end{figure}
Consider the following example:  
$S\colon [0,1]\to [0,\infty)$ is given by 
$S(x)=c \min\{x,1-x\}$, $c>2$
(see Fig.~\ref{r:zjazd1}). Then
\[
P_Sf(x)=\frac 1c\left(f\left(\frac xc \right)+f\left(1-\frac xc\right) \right).
\]
In this example we have 
\[
\lambda^{-n} P^n_S f\to f^* \int_Xf(x)\,\nu (dx),
\]
$\lambda=2/c$ and the support of the measure $\nu$ is similar to the Cantor's set.

A continuous version of Theorem~\ref{RR-lf} was proved
in~\cite{LasotaRudnicki} and applied to study 
the movement of a point on a circle with random jumps.  
Nowadays, such a movement is called a piecewise deterministic Markov process, although at the time we did not know this name.
We also applied this theorem to some partial differential equations with integral perturbation.
Methods presenting in papers 
\cite{LasotaRudnicki,Rudnicki-C(X)}
are also used in the paper with
M.C. Mackey~\cite{MackeyRudnicki94sim}
to study the asymptotic behaviour of non-homogeneous age dependent cellular populations.
A non-autonomous version of Theorem~\ref{RR-lf} was proved by 
A. Lasota and J.A. Yorke~\cite{LY-SIAM}.

\section{Asymptotic behaviour of stochastic semigroups}
\label{s:Asymptot}
\subsection{Partially integral stochastic semigroups}
\label{ss:partial-integral}
We will now present methods for studying the asymptotics of stochastic semigroups based on an idea reminiscent of the use of the lower function.
It involves estimating a semigroup from below using an integral operator. 

A Markov semigroup $\{P^t\}$ is  called  \textit{partially integral}
if there exist $t>0$ and a measurable function   $k(t,x,y)\ge 0$, such that 
\[
\int_X\int_X k(t,x,y)\,m(dx)m(dy)>0,
\]
and
\[
P^tf(y)\ge\int k(t,x,y)f(x)\,m(dx)
\quad {\rm for}
\quad f\in D.
\]

Using the theory of Harris operators one can prove many interesting results 
concerning partially integral 
stochastic (and substochastic) semigroups.

The basic results have been published in the paper~\cite{R-b95}, but we will limit ourselves to a few selected later results.
In \cite{PR-jmaa2} was proved the following result.
\begin{theorem}
\label{asym-th2} 
If  a continuous time partially integral stochastic semigroup $\{P^t\}_{t\ge 0}$
has a unique invariant density $f^*$ 
and  $f^*>0$,
then it 
is asymptotically stable.   
\end{theorem}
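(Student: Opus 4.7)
My plan is to reduce the theorem to an application of the Lasota--Yorke lower function theorem (Theorem~\ref{LYt}) by exhibiting a single lower function $h$ for the semigroup $\{P^t\}_{t\ge 0}$. The strategy mirrors the role that the kernel $k(t,x,y)$ plays in the Harris-operator theory: it acts as a built-in minorant, while the strict positivity of the unique invariant density $f^*$ prevents the mass from escaping any fixed set.

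First I would pass to the discrete-time skeleton $\{P^{nt_0}\}_{n\ge 0}$, where $t_0$ is the time at which the partial integral estimate holds. Write $Qf(y):=\int_X k(t_0,x,y)f(x)\,m(dx)$, so that $Q$ is a nontrivial substochastic integral operator with $Qf\le P^{t_0}f$. Iterating gives $P^{nt_0}f\ge \sum_{j=0}^{n-1} P^{(n-1-j)t_0}Q(I-Q)^{\,j}f$ (or a similar Dyson-type expansion), so contributions from the kernel $k$ accumulate. Because $Q$ has an integral kernel, each $Qf$ is dominated by an absolutely continuous part that can be estimated from below on a suitable measurable set.

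Second, I would invoke a Foguel-type dichotomy for partially integral stochastic operators: either the semigroup is \emph{sweeping} from every set of finite measure (i.e.\ $\int_A P^{nt_0}f\,dm\to 0$ for each density $f$ and each $A$ with $m(A)<\infty$), or a nontrivial lower function exists. Sweeping is ruled out by the hypotheses: the invariance $P^{nt_0}f^*=f^*$ together with $f^*>0$ forces $\int_A P^{nt_0}f^*\,dm=\int_A f^*\,dm>0$ for every $A$ with $m(A)>0$, which is incompatible with sweeping. Hence a lower function $h$ for $\{P^{nt_0}\}$ exists, and Theorem~\ref{LYt} yields asymptotic stability of the skeleton with limit density equal to $f^*$ (by uniqueness of the invariant density). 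A short continuity argument, writing $t=nt_0+r$ with $0\le r<t_0$ and using continuity of $s\mapsto P^s f$, transfers asymptotic stability to the full continuous-time semigroup.

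The main obstacle will be the second step: producing a \emph{universal} lower function from a kernel that is merely assumed to satisfy $\int\!\int k\,dm\,dm>0$. The bare minorization says nothing about where the support of $k(t_0,\cdot,\cdot)$ lies relative to an arbitrary set of positive $f^*$-measure, so one must propagate the minorant across the space using the iterates of $P^t$. The bridge is exactly the strict positivity of $f^*$ combined with uniqueness: an ergodic decomposition argument shows that there is a single ergodic component (otherwise $f^*$ would split into distinct invariant densities), and on that component $f^*>0$ spreads the kernel minorant to the whole space in finitely many steps. Making this spreading step quantitative --- obtaining $h$ independent of the density $f$ --- is the technical heart of the proof.
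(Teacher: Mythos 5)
The paper does not actually prove Theorem~\ref{asym-th2}; it only cites \cite{PR-jmaa2}, so your attempt has to stand on its own. It has a genuine gap at exactly the step you yourself flag as the technical heart. The dichotomy you invoke in the second step --- a partially integral stochastic operator is either sweeping from sets of finite measure or admits a lower function --- is not an available theorem, and it is false at the level of generality you need. Take $X=\{1,2\}$ with counting measure and let $P$ swap the two atoms. This operator is integral (hence partially integral), has the unique invariant density $f^*=(1/2,1/2)>0$, and is not sweeping; yet it admits no lower function and is not asymptotically stable, because $P^n\mathbf 1_{\{1\}}$ oscillates with period two. The same example shows that the discrete-time analogue of Theorem~\ref{asym-th2} is simply false, so no argument that passes to the skeleton $\{P^{nt_0}\}_{n\ge0}$ and then uses only ``partially integral, unique invariant density, $f^*>0$'' for that skeleton can succeed: the skeleton satisfies exactly those hypotheses and nothing more. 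Your proposed repair --- uniqueness forces a single ergodic component and $f^*>0$ spreads the minorant --- does not close the gap, because the counterexample is already ergodic with strictly positive invariant density; the obstruction is periodicity, not reducibility.

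What makes the continuous-time statement true is that the time parameter enters essentially, not merely through the final $t=nt_0+r$ bookkeeping (which, incidentally, needs no continuity at all, since $P^r$ is an $L^1$-contraction and stability of the skeleton with limit $f^*$ would transfer at once). The proof in \cite{PR-jmaa2} works in the Harris-operator framework alluded to in Section~\ref{ss:partial-integral}: with $f^*>0$ one views $\{P^t\}$ as a Markov semigroup preserving the probability measure $f^*\,dm$, uniqueness of $f^*$ gives ergodicity, partial integrality makes the semigroup Harris, and a Foguel-type decomposition combined with a zero--two law shows that the periodic (deterministic) part --- the only remaining obstruction to $\|P^tf-f^*\|\to0$ --- must be trivial, because the set of times at which the integral part is nontrivial is an additive semigroup of $[0,\infty)$ and cannot be confined to a lattice. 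If you want to salvage the lower-function strategy, you must construct the lower function for the continuous-time family directly, e.g.\ by showing that the kernels of $P^t$ for $t$ in a whole interval jointly dominate a fixed $\varepsilon\,g(y)$ on a set carrying a uniform fraction of the mass of every $P^sf$; doing so is essentially equivalent to redoing the Harris argument.
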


\subsection{Asymptotic decomposition of  a stochastic semigroups}
\label{ss:asym-decomp}
The main problem with applying Theorem \ref{asym-th2} 
is that often it is not easy to find an invariant density.
To get around this difficulty, in \cite{PR-JMMA2016} was  devised a method of studying of stochastic semigroups via their asymptotic decomposition.
To present this method, from now on we assume additionally  that $(X,\rho)$ is a separable metric
space and   $\Sigma=\mathcal B(X)$ is the $\sigma$-algebra of Borel subsets of $X$. We consider stochastic semigroups $\{P^t\}_{t\ge 0}$
which satisfy the following condition:

\noindent (K) for every $x_0\in X$ there exist  an $\varepsilon >0$,  a $t>0$,
and a measurable function 
$\eta\ge 0$ such that $\int \eta(x)\, m(dx)>0$ and
\begin{equation}
\label{w-eta3}
k(t,x,y)\ge \eta(y) \quad \textrm{for $x\in B(x_0,\varepsilon)$, $y\in X$},
\end{equation}
where
$B(x_0,\varepsilon)=\{x\in X:\,\,\rho(x,x_0)<\varepsilon\}$.
It is clear that stochastic semigroups satisfying condition (K) are partially integral. 
Now we present the main theorem of \cite{PR-JMMA2016}. 

\begin{theorem}
 \label{th:2}
Let $\{P^t\}_{t\ge 0}$ be a continuous time stochastic semigroup which satisfies {\rm (K)}.
Then there exist 
a countable (possible empty)
set $J$, a family of invariant densities
$\{f^*_j\}_{j\in J}$ with disjoint supports $\{A_j\}_{j\in J}$,  and a family
$\{\alpha_j\}_{j\in J}$
of positive linear functionals  defined on $L^1$
 such that
\begin{itemize}
 \item[(i)] for every $j\in J$ and for every  $f\in L^1$ we have
\begin{equation}
\label{pomoc:ogolny}
 \lim_{t\to \infty}\|\mathbf 1_{A_j} P^tf-\alpha_j(f)f^*_j\|=0,
\end{equation}
\item[(ii)] if $Y=X\setminus \bigcup\limits _{j\in J} A_j$, then for every  $f\in L^1$ and for every compact set $F$ we have
\begin{equation}
\label{th2:sw}
\lim_{t\to \infty} \int\limits_{F\cap Y} P^tf(x)\, m(dx)=0.
 \end{equation}
 \end{itemize}
\end{theorem}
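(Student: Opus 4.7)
The overall plan is to decompose $X$ into an at most countable union of ergodic components $A_j$ together with a residual set $Y$ from which mass is swept off every compact set. The local minorization in condition (K) is precisely what makes Theorem~\ref{asym-th2} applicable after restriction to an $A_j$, so the strategy is: identify the components, apply Theorem~\ref{asym-th2} to each, and then show that no invariant density can live on what remains.

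To construct the components, I would consider the convex set of $\{P^t\}$-invariant densities and extract a maximal family $\{f_j^*\}_{j\in J}$ whose supports $A_j := \supp f_j^*$ are pairwise disjoint; each $f_j^*$ can moreover be taken to be an extreme point of the set of invariant densities supported in $A_j$, which forces uniqueness of the invariant density on each $A_j$. Verifying that the restricted semigroup genuinely lives on $L^1(A_j, m)$ is a standard fact for Markov operators: the support of an invariant density is forward invariant. Countability of $J$ then follows from separability of $(X,\rho)$: each $A_j$ contains a ball $B(x_0,\varepsilon)$ on which the minorizing function from (K) contributes positive mass into $A_j$, and a family of pairwise disjoint open balls in a separable metric space is necessarily countable.

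Once the pieces are in place, Theorem~\ref{asym-th2} applied to $\{P^t\}$ restricted to $L^1(A_j, m)$ gives asymptotic stability with unique invariant density $f_j^*$, so for every $f\in L^1$ with $\supp f \subset A_j$ one has $\|P^t f - (\int f\,dm)\,f_j^*\|\to 0$. For general $f\in L^1$ I would define
\[
\alpha_j(f) := \lim_{t\to\infty} \int_{A_j} P^tf(x)\,m(dx),
\]
the existence of the limit being obtained by a weak-compactness argument combined with uniqueness of the invariant density on $A_j$; linearity and positivity of $\alpha_j$ are then immediate, and \eqref{pomoc:ogolny} follows by splitting $P^t f = P^{t-s}(\mathbf 1_{A_j} P^s f) + P^{t-s}(\mathbf 1_{A_j^c} P^s f)$ for large $s$, restricting to $A_j$, and applying the asymptotic stability on $A_j$ to the first term.

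The main obstacle is assertion (ii). I would argue by contradiction: if there were a density $f$, a compact set $F$, and $\delta>0$ with $\limsup_{t\to\infty}\int_{F\cap Y} P^t f\,dm \ge \delta$, then Cesaro averaging applied to $\{P^t f\}$ would, via the local lower bound in (K) at some accumulation point $x_0\in F\cap Y$, produce a nontrivial invariant density charging a neighborhood of $x_0$. This invariant density would generate a new ergodic component disjoint from every existing $A_j$, contradicting the maximality of $J$. The technical heart of this last step is ensuring that the Cesaro averages do not dissolve through loss of mass before an invariant density can be extracted, and condition (K) is exactly the device that prevents this: the minorization by an integral operator with kernel $\eta$ turns every visit to $B(x_0,\varepsilon)$ into a permanent deposit of mass distributed according to $\eta$, which serves as the seed of the invariant density near $x_0$.
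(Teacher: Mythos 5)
First, a caveat: this survey does not actually prove Theorem~\ref{th:2}; it is quoted from \cite{PR-JMMA2016}, so your proposal can only be measured against the strategy of that paper. Your skeleton for the decomposition and for part (i) is essentially the right one and close to the source: use the lattice structure of the fixed points of a Markov operator to extract a maximal family of invariant densities with pairwise disjoint minimal supports, check that each $A_j$ is invariant, and apply Theorem~\ref{asym-th2} to the restriction, which is partially integral by (K) and has a unique a.e.\ positive invariant density there. Two simplifications you missed: countability of $J$ is automatic from $\sigma$-finiteness of $m$ (a disjoint family of sets of positive measure is countable), and the limit defining $\alpha_j(f)$ for $f\ge0$ exists because $t\mapsto\int_{A_j}P^tf\,dm$ is nondecreasing by invariance of $A_j$ --- no weak-compactness argument is needed.

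The genuine gap is in part (ii), and it is twofold. First, ``Cesàro averaging produces a nontrivial invariant density'' is not a routine step: the averages $\tfrac1T\int_0^TP^tf\,dt$ need not be weakly precompact, and condition (K) only yields $P^{t_n+s}f\ge\delta\eta$ along a subsequence $t_n\to\infty$, which is strictly weaker than a lower function in the sense of Theorem~\ref{LYt}. Converting such a subsequential minorization into an invariant density is a theorem in its own right (the sweeping-versus-invariant-density dichotomy developed in \cite{R-b95}), and your sketch assumes it rather than proves it. Second, and more seriously, even granting the extraction, the invariant density you obtain is bounded below only by a multiple of $\eta$, and (K) places no constraint on where $\eta$ is supported: it may lie entirely inside $\bigcup_jA_j$. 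In that case the extracted density is merely a convex combination of the existing $f_j^*$, there is no new ergodic component, and no contradiction with the maximality of $J$ arises --- yet the persistent mass on $F\cap Y$ remains unexplained. What must actually be established is a local dichotomy at each $x_0$ strong enough to show that either the semigroup is sweeping from $B(x_0,\varepsilon)$ or $B(x_0,\varepsilon)$ meets $Y$ in a null set, after which a finite subcover of the compact set $F\cap Y$ finishes the proof; proving that dichotomy, via the conservative/dissipative decomposition and Harris-type arguments, is where the real work of \cite{PR-JMMA2016} lies, and the maximality argument as you state it does not reach it.
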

We note that not only the sets $A_j$, $j\in J$, are disjoint but also their closures are disjoint, which  
means that the measures $\mu_j(dx):=f^*_j(x)\,m(dx)$ have disjoint topological supports \cite{PR-SD}.

In order to formulate a corollary of Theorem \ref{th:2}, we need an auxiliary notion.
A stochastic semigroup $\{P^t\}_{t\ge 0}$ is
called \textit{sweeping}
from a set $B\in\Sigma$ if for every
 $f\in D$
\begin{equation*}
\lim_{t\to\infty}\int_B P^tf(x)\,m(dx)=0.
\end{equation*}

\begin{corollary}
\label{col-sw}
Assume that a stochastic semigroup $\{P^t\}_{t\ge 0}$ satisfies
condition {\rm (K)} and has no invariant densities.
Then $\{P^t\}_{t\ge 0}$ is sweeping from compact sets.
\end{corollary}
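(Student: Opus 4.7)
The plan is to deduce this as an almost immediate consequence of Theorem~\ref{th:2}. Since the semigroup $\{P^t\}_{t\ge 0}$ satisfies condition (K), the theorem produces a (possibly empty) countable index set $J$, a family $\{f^*_j\}_{j\in J}$ of invariant densities with pairwise disjoint supports $\{A_j\}_{j\in J}$, positive functionals $\{\alpha_j\}_{j\in J}$, and the residual set $Y=X\setminus\bigcup_{j\in J}A_j$ satisfying~\eqref{th2:sw}. The whole corollary should fall out once I identify how the hypothesis ``no invariant densities'' constrains the conclusion.

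First I would observe that the hypothesis forces $J=\emptyset$. Indeed, each $f^*_j$ furnished by Theorem~\ref{th:2} is by construction an invariant density of $\{P^t\}_{t\ge 0}$, so if $J$ were nonempty we would contradict the standing assumption that $\{P^t\}_{t\ge 0}$ has no invariant density. With $J=\emptyset$, the decomposition collapses: the union $\bigcup_{j\in J}A_j$ is empty and hence $Y=X$.

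Next I would feed this into part~(ii) of Theorem~\ref{th:2}. For every compact set $F\subset X$ and every $f\in L^1$, the identity $F\cap Y=F$ reduces \eqref{th2:sw} to
\[
\lim_{t\to\infty}\int_{F}P^tf(x)\,m(dx)=0.
\]
Restricting to $f\in D$ is exactly the definition of sweeping from $F$, which finishes the argument.

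I do not anticipate a real obstacle, since the corollary is essentially a bookkeeping consequence of the asymptotic decomposition. The only point that requires a brief verbal remark is the one in the first step, namely that the $f^*_j$ produced by Theorem~\ref{th:2} are genuine invariant densities for the whole semigroup (so that assuming no invariant density truly empties $J$); this is already built into the statement of Theorem~\ref{th:2}, so no extra work is needed.
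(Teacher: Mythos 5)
Your argument is correct and is exactly the intended deduction: the absence of invariant densities forces $J=\emptyset$ in Theorem~\ref{th:2}, so $Y=X$ and part~(ii) yields sweeping from every compact set. The paper states the corollary without writing out a proof, but this is precisely the reasoning it relies on.
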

This result also holds for discrete time stochastic semigroups.

Theorem~\ref{th:2} allows us also to find conditions guaranteeing that a stochastic semigroup $\{P^t\}_{t\ge 0}$ satisfies the {\it Foguel alternative}, i.e. it
is asymptotically stable or sweeping from all compact sets.
An advantage of the formulation of some results in the form of the Foguel
alternative is that in order to show asymptotic stability we do not need to prove the existence of an invariant density. It is enough to check that the semigroup is not sweeping from compact sets then, automatically, the semigroup $\{P^t\}_{t\ge 0}$ is asymptotically stable.
A more comprehensive introduction to the subject of 
 convergence and asymptotic behaviour of
semigroups of operators can be found in~\cite{BobrowskiRudnicki}.

\subsection{Applications to piecewise deterministic Markov processes}
\label{ss:PDMPs}
Theorems from Section~\ref{ss:asym-decomp} are very useful in the study of stochastic semigroups generated by Markov processes. In the case of diffusion processes, the semigroups describing the evolution of the density of distributions of the process are integral and have continuous and positive kernels. Hence, Theorem~\ref{th:2} can be applied without additional conditions and we immediately obtain the Foguel alternative. 
Theorem~\ref{th:2} is satisfied by a semigroup induced by any Markov chains because the space is discrete. This semigroup satisfies the Foguel alternative if the chain is irreducible. It turns out that the theorem can also be applied to piecewise deterministic Markov processes (PDMPs).  We will now give a definition of a PDMP and explain why such processes lead to partially integral semigroups.

According to a non-rigorous definition by Davis \cite{davis84}, the class of PDMPs is a
general family of stochastic models covering virtually all non-diffusion applications. A more formal definition of a PDMP is the following.
A continuous time (homogeneous) Markov process $X(t)$
is a \textit{PDMP} if  there is an increasing sequence of random times $(t_n)$, 
called \textit{jumps}, such that  
the sample paths  of $X(t)$ 
are defined in a deterministic way in each interval $(t_n,t_{n+1})$.

We consider three types of jumps:
\begin{itemize}
\item[a)] the process  can jump to a new point,  
\item[b)] at the moment of jump it changes the dynamics which defines 
its trajectories, 
\item[c)] changes the dynamics when it hits some surface (a stochastic billiard).
\end{itemize}
 
The induced semigroup by a PDMP is generally not integral, but because the jumps are at random moments there is a ``blurring" of distributions. For example, from a distribution concentrated at a point we get an absolutely continuous distribution with respect to the Lebesgue measure, which leads to a partially integral  semigroup.

\begin{figure}[h]
\begin{center}
\includegraphics[height=5cm]{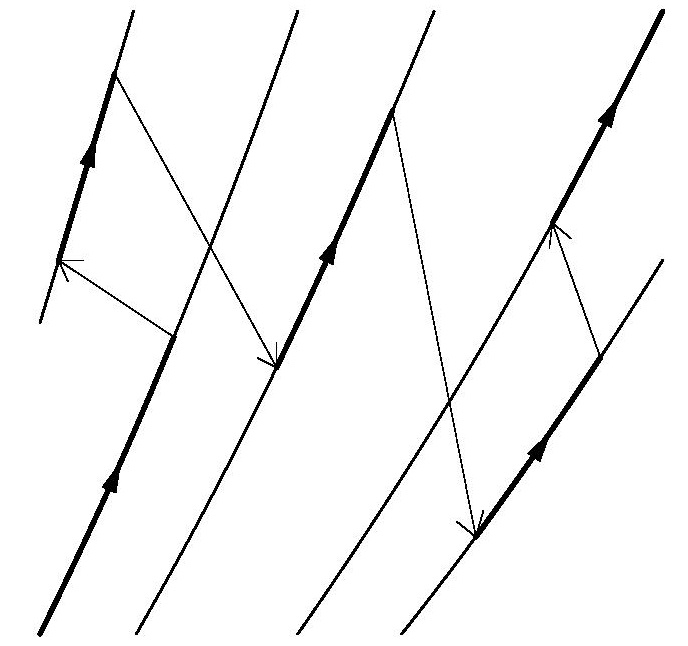}
\end{center}
\vskip-2mm
\caption{A flow with random jumps}
\label{r:zjazd2}
\end{figure}
\textit{Flows with jumps} (see Fig.~\ref{r:zjazd2}) are used in models of
population growth with disasters, immune systems \cite{PR-imun1,PR-imun2} and  cell cycle models \cite{PR-cell-cycle,PR-cell-2022}.

\textit{Processes with  switching dynamics} (see Fig.~\ref{r:zjazd3})
describe the action of several flows  with random switching between them.
They  are used as models of gene regulatory networks~\cite{BLPR,RT}. Depending
on the state of activity of the genes occurring in the network, such a model is described by
different systems of differential equations. A change in gene activity switches the system. 
\begin{figure}[h]
\begin{center}
\includegraphics[height=5cm]{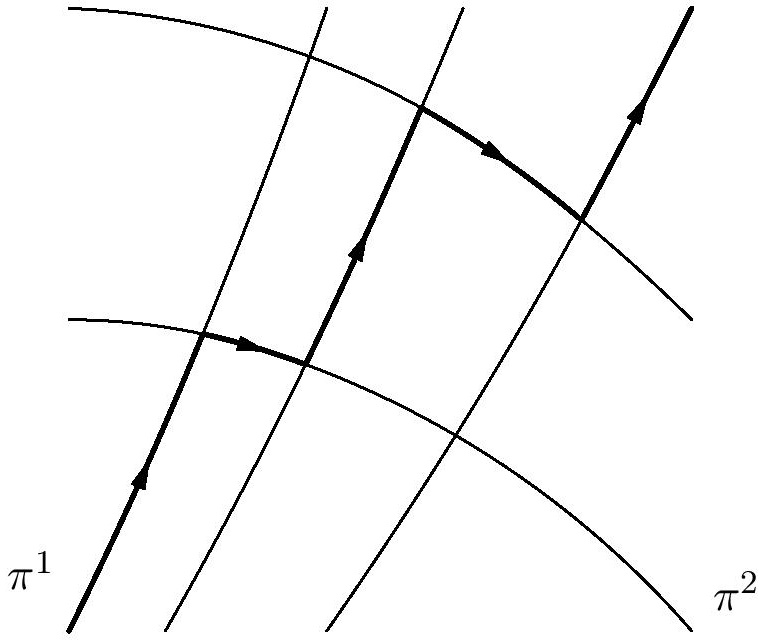}
\end{center}
\vskip-2mm
\caption{Process with  switching dynamics}
\label{r:zjazd3}
\end{figure}

We also consider 
processes in which a change in dynamics or a jump in phase space occurs when the process reaches the boundary of the domain or some fixed subset of the phase space. An example of such a process is a stochastic billiard~\cite{CPSV,LM-KR,M-KR}.
Similar properties have  models of cell cycle and  neuron activity~\cite{PR-Stein}.
A more comprehensive introduction to PDMPs and there applications 
can be found in~\cite{BiolMat2,RT-K-k}.

\section{Invariant measures for partial differential equations}
\label{s:im-pde}
\subsection{Lasota example}
\label{ss:L-ex}
It is hardly known that solutions of  simple 
linear partial differential equations 
behave in a chaotic way or have interesting ergodic properties.
Consider the following equation with the initial condition:   
\begin{equation}
\label{eq:lin}
\frac{\partial u}{\partial t}+
x\frac{\partial u}{\partial x}=\lambda u,\,\,\, 
u(0,x)=v(x), \,\,\,\lambda>0.
\end{equation}
This equation can served as a very simple model of the dynamics of cell maturation.
Here $x$ is the cell maturity, $1$  is the maturation rate. If $k$ is the rate of population growth, then  $\lambda=k-1$. 

Problem~\eqref{eq:lin} 
defines a semiflow on the space 
\[
X=\{v\in C[0,1]\colon v(0)=0\}
\]
given by 
\[
S^tv(x)=u(t,x)=
e^{\lambda t}v(e^{-t}x).
\] 

If the space is finite dimensional, then ergodic properties of semiflows can be successfully investigated by means of Frobenius--Perron operators. But this method is rather difficult
to apply in the infinite dimensional space~$X$. 
A. Lasota~\cite{Lasota-turbulencja} proposed to apply Theorem~\ref{miaraKB-w} to this flow
 and he proved 
that if $\lambda \ge 2$, then  there is a continuous ergodic measure
$\mu$ on $X$ invariant with respect to $\{S^t\}$.

The disadvantage of this method is that the constructed measure has a relatively small support and it is not mixing, 
and thus we cannot use it to prove the chaotic properties discussed in Sec.~\ref{ss:stoch}.  
In particular we are not able to prove that the semiflow $(S,\mu)$ has a dense orbit and has the property of sensitive dependence on initial conditions.

Looking for a method to construct invariant measures that have the desired properties, I noticed that it would be more convenient to use a method based on  isomorphism of the flow with a translation semiflow and using measure introduced by some stationary process.
We will present this method in the next section, but now we only mention that an invariant mixing measure with $\supp \mu=X$  can be given by the formula $\mu(A)={\rm P}(\xi_x\in A)$,
where $\xi_x=w_{x^{2\lambda}}$ and $w_x$ is the standard Wiener process.
Since problem~\eqref{eq:lin} describes the evolution of distribution of 
maturity in a cellular population one can prefer to consider 
the semiflow $\{S^t\}_{t\ge 0}$ restricted to the set 
$X_+=\{v\in X\colon v\ge 0\}$. In this case 
the invariant measure on $X_+$ can be induced by the process 
$\xi_x=|w_{x^{2\lambda}}|$.

\subsection{General approach}
\label{ss:gen-app} 
Now we consider  semiflow generated  by the equation of 
the form
\begin{equation}
\label{pde}
\frac{\partial u}{\partial t}+c(x)\frac{\partial
u}{\partial x}= f(x,u).
\end{equation}
We assume that $c$ and $f$ are $C^1$-functions and  
$c(0)=0$, $c(x)>0$ for $x\in (0,1]$,  
$f(0,u_*)=0$, $\,\frac{\partial f}{\partial u}(0,u_*)>0$, 
$\,f(0,u)\ne 0$ for $u\ne u_*$. 
Let 
\[
X=\{v\in C[0,1]: \ \ v(0)=u_* \}\quad\text{and}\quad
S_tv(x)=u(t,x).  
\]
This flow was studied  in~\cite{Rudnicki-pde1,Rudnicki-pde2} and, 
among other things, it was proved the following theorem.
\begin{theorem}
\label{miaraRRg} 
There exists a probability measure  $\mu$ 
such that: 

\noindent{\rm{(a)}} $\mu$  is invariant with respect to  $\{S^t\}$,\\
\noindent{{\rm(b)}} $\mu$ is exact,\\  
\noindent{{\rm(c)}} ${\rm supp\,}\mu=X$.
\end{theorem}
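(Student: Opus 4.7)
The plan is to follow the approach outlined above for the Lasota example: conjugate the semiflow $\{S^t\}$ to a translation semigroup on a space of functions, then pull back an invariant measure built from a stationary stochastic process.

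First I would perform a change of variables to reveal the translation structure. Setting
\[
\tau(x) = \int_1^x \frac{dy}{c(y)}, \qquad x \in (0,1],
\]
gives a $C^1$-diffeomorphism of $(0,1]$ onto $(-\infty, 0]$ (the integral diverges at $0$ because $c(0) = 0$), and the characteristic flow $\psi_t$ of the transport part satisfies $\tau(\psi_t(x)) = \tau(x) + t$. Next I would straighten out the fiberwise ODE $du/ds = f(x, u)$. Since $\alpha := \partial_u f(0, u_*) > 0$ and $f(0, u) \ne 0$ for $u \ne u_*$, the equilibrium $u_*$ is hyperbolic and repelling for the limiting ODE at $x = 0$. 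A Sternberg-type linearization then produces a $C^1$ diffeomorphism $u = F(x, w)$, with $F(x, 0) = u_*$, conjugating the dynamics to the linear equation $dw/ds = \alpha w$. Combining the two substitutions, $\eta(\tau) := e^{-\alpha \tau} F^{-1}\bigl(x(\tau), v(x(\tau))\bigr)$, converts $S^t$ into a pure translation $(T^t \eta)(\tau) = \eta(\tau - t)$, and the map $\Theta: v \mapsto \eta$ is a homeomorphism of $X$ onto a translation-invariant space $\tilde X$ of continuous functions on $(-\infty, 0]$.

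With the reduction in hand, I would use a stationary stochastic process to produce the invariant measure. By analogy with the choice $\xi_x = w_{x^{2\lambda}}$ in the Lasota case, take the Doob transform $\eta(\tau) = e^{-\alpha \tau} w(e^{2\alpha \tau})$ of a Wiener process $w$: this is a stationary Ornstein--Uhlenbeck process on $\mathbb R$ whose law $\nu$ is translation-invariant, is exact under the shift, and has full topological support in the space of continuous functions. Defining $\mu := (\Theta^{-1})_* \nu$, the three properties fall out at once: invariance of $\mu$ under $\{S^t\}$ is exactly the stationarity of $\eta$, exactness of $(S, \mu)$ is inherited from exactness of the shift acting on $\nu$, and $\supp \mu = X$ follows from $\supp \nu$ equaling the full path space together with the homeomorphism property of $\Theta$.

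The main obstacle will be Step~2, the construction of the smooth conjugacy $F(x, \cdot)$. In the Lasota example, where $c(x) = x$ and $f(x, u) = \lambda u$, linearization is automatic and the explicit substitution $v(x) = x^\lambda \eta(\ln x)$ works by hand; for the general nonlinear $f$ one must verify that the Sternberg-type conjugacy is $C^1$ jointly in $(x, w)$, is defined on a domain large enough to cover all of $X$, and respects the transport flow so as to yield an exact translation rather than a translation only up to asymptotically small corrections. A secondary technical point is to check that the Doob-transformed Brownian paths, once mapped back through $\Theta^{-1}$, almost surely satisfy the boundary condition $v(0) = u_*$; this reduces to the well-known fact that $e^{-\alpha \tau} w(e^{2\alpha \tau}) \to 0$ a.s. as $\tau \to -\infty$, a consequence of the law of the iterated logarithm for Brownian motion at the origin.
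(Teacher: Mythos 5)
Your overall architecture coincides with the paper's: conjugate $\{S^t\}$ to a shift semiflow on a path space, push forward the law of a stationary Ornstein--Uhlenbeck process obtained as a Doob/Lamperti transform of Brownian motion, and then read off invariance from stationarity, exactness from the exactness of the shift (in the paper, via Blumenthal's zero--one law), and full support from the positivity of Wiener measure on tubes. The one place where you genuinely diverge is the construction of the conjugacy, and it is exactly the step you flag as unresolved. You propose to straighten the characteristics via $\tau(x)=\int_1^x dy/c(y)$ and then linearize the fiberwise ODE $du/ds=f(x,u)$ near the repelling equilibrium $u_*$ by a Sternberg-type theorem, so that $S^t$ becomes an exact translation. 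The paper avoids this entirely: it takes $Y=C[0,\infty)$ with the left shift $(T^t\varphi)(s)=\varphi(s+t)$ and defines $Q\colon X\to Y$ by $Qv(t)=S^tv(1)$, i.e.\ the trace of the solution at the endpoint $x=1$ as a function of time. The intertwining $Q\circ S^t=T^t\circ Q$ is then immediate from the semigroup property, and the hypotheses $c(0)=0$, $f(0,u_*)=0$, $\partial f/\partial u(0,u_*)>0$ are used only to show that $Q$ is a homeomorphism onto $Y$ (backward integration along characteristics from the trace recovers $v$ on $(0,1]$, and the repelling character of $u_*$ forces $v(0)=u_*$ in the limit). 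So the nonlinearity of $f$ never needs to be removed; the invariant measure is simply $\mu=m\circ Q$ where $m$ is the law of $\xi_t=e^tw_{e^{-2t}}$ on $Y$. Your route could presumably be pushed through, but the joint $C^1$ regularity and global domain of the Sternberg conjugacy, which you correctly identify as the main obstacle, is a real technical burden that the trace-map device makes unnecessary; if you replace your Step~2 by the paper's $Q$, the rest of your argument (stationary OU law, exactness, full support, and the boundary condition $v(0)=u_*$ via the law of the iterated logarithm) goes through essentially as you wrote it.
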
 
\begin{proof}[Draft of the proof]
Let  $Y=C[0,\infty)$ and 
let $\{T^t\}_{t\ge 0}$ be the left-side shift on the space 
$Y$ defined by $(T^t \varphi)(s)=\varphi(s+t)$.\\
1. The  semiflows  
$\{S^t\}_{t\ge 0}$ and 
$\{T^t\}_{t\ge 0}$ 
 are isomorphic, i.e.
the map $Q: X \to Y$ given by $Qv(t)=S^tv(1)$ is 
a homeomorphism of $X$ onto 
$Y$ and 
\begin{equation}
\label{izomorfizm}
Q \circ S^t=T^t\circ Q \quad \text{for $\,t\ge 0$.}  
\end{equation}
2. Let $w_t$, $t\ge 0$,  be the Wiener  process 
and $\xi_t=e^t w_{e^{-2t}}$
for $t\ge 0$. Then $\xi_t$ is a stationary Gaussian
process with continuous trajectories. Let 
\[
m(A)=
{\rm P}\{
\omega: \xi_t(\omega)\in A
\},
\quad A\in{\mathcal B}(Y). 
\]
The measure $m$ is invariant under $\{T^t\}$
and $m(Q(X))=1$.  
Thus the measure $\mu(A)=m(Q(A))$ is invariant under $\{S^t\}$.\\
3. Exactness of $(T^t,Y)$ follows from 
the Blumenthal's zero-one law for the Wiener process;  from 
the definition of the measure $m$; and from an equivalent definition of exactness.\\
4. Positivity of  $\mu$ on open sets can be obtained from
the following property of Wiener process: 
\[
{\rm P}\{\omega: f(t)<w_t(\omega)<g(t)\ {\rm for\ } t\in[a,b]\} >0
\]
for continuous functions $f<g$ and $0<a<b$.
\end{proof}

\subsection{Other chaotic models}
\label{ss:other}
The method of constructing invariant measures presented in Section~\ref{ss:gen-app}  can be used for semiflows generated by other partial differential equations, although the proofs are no longer as easy as for Eq.~\eqref{pde}.
We will briefly present other equations studied using this method.

We consider a population of stem cells. 
These cells live in the bone marrow and they 
are precursors of any blood cells. 
They are subjects of two biological processes: maturation and division. 
Stem cells can be at different levels  of morphological development 
called maturity.  The maturity of a cell
differs from its age, because we assume that a newly born cell is 
in the same morphological state as its mother at the point of division. 
We assume that maturity is a real number $x\in [0,1]$. 
The function $u(t,x)$ describes the density distribution function 
of cells with respect to their maturity.
The maturity grows according to the  equation
$x'=g(x)$. When one cell reaches the maturity $1$ 
it leaves the bone marrow, then 
one of cells from the bone marrow splits. 
This cell is chosen randomly according to the distribution given by the density
$u(t,x)$. 
It follows from the assumptions that a newly born cell has the same maturity as its mother cell 
and each cell can divide with the same probability (see Fig.~\ref{r:model-chaos}).    
\begin{figure}[h]
\begin{center}
\begin{picture}(225,80)(20,-30)
\includegraphics[viewport=172 613 377 737]{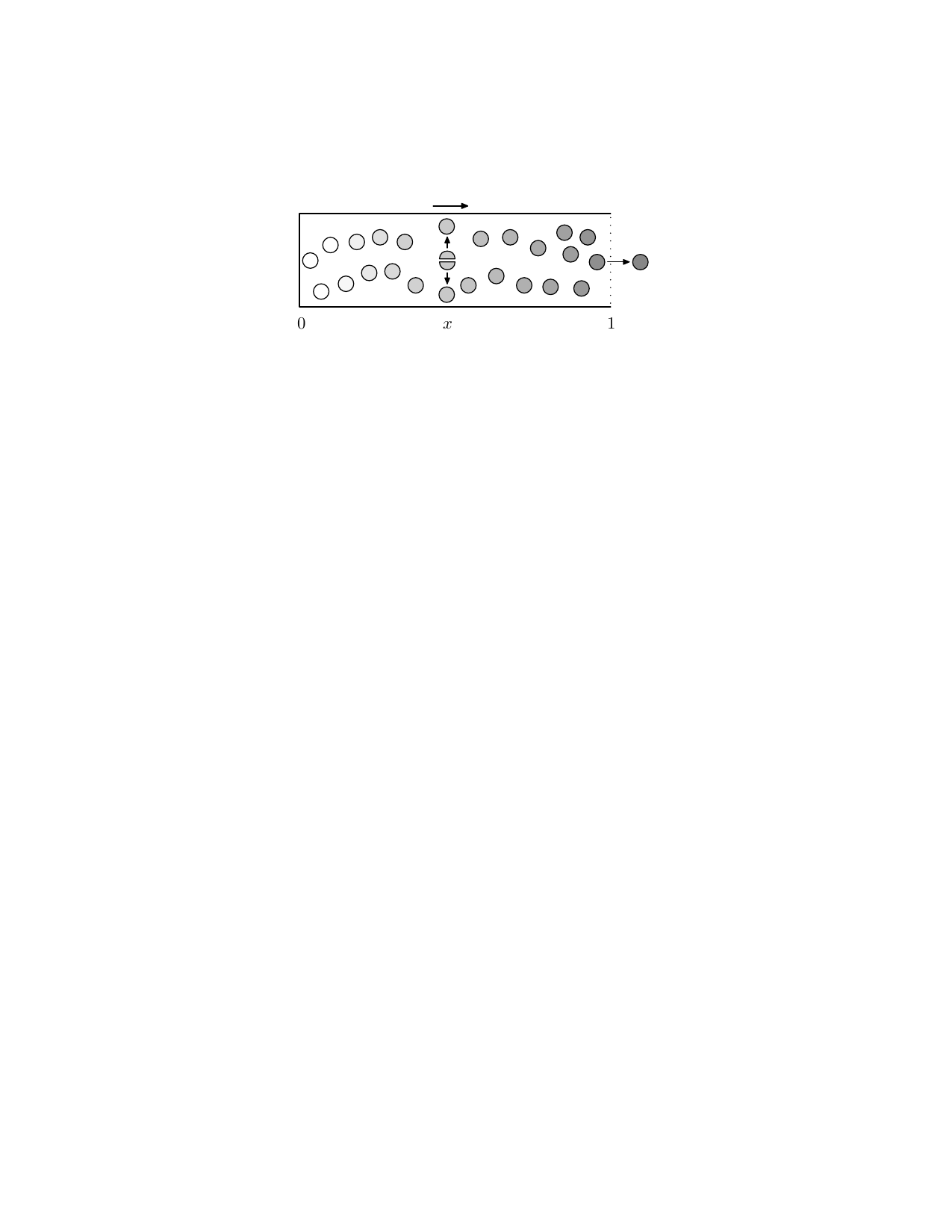} 
\end{picture}
\end{center}
\caption{Scheme of maturation and division of cells in the bone marrow.}
\label{r:model-chaos}
\end{figure}
There is an exterior regulatory system 
in which the production of erythrocytes is stimulated 
by the hormone erythropoietin and
the system tries to keep the number of erythrocytes on a constant level. 
We have not added this external regulatory system, 
thus we consider a pathological case when
the  exterior regulatory system does not work.
The model is described by a nonlinear semiflow
induced by the problem
\begin{equation}
\label{e1}
\frac{\partial u}{\partial t}+
\frac{\partial }{\partial x} (g(x)u)=
g(1)u(t,1)u(t,x), \quad u(0,x)=u_0(x).
\end{equation}
The semiflow is defined on the space of densities.
In the paper \cite{Rudnicki-chaos-den} it was shown 
that the semiflow generated by the initial problem 
(\ref{e1}) posses an invariant measure 
which is mixing and supported on the whole set of all densities.

The second example is the Bell and Anderson model of 
 size structured cellular  population given by the equation
\begin{equation}
\label{e22}
\frac{\partial u}{\partial t}+
\frac{\partial }{\partial x} (g(x)u)=
-(\mu+b)u(t,x)+4bu(t,2x),
\end{equation}
where $x\in [0,1]$ and we put $u(t,2x)=0$ if $2x>1$.
In \cite{Rudnicki-pd} it was shown 
that if $g(x)=ax$, then there exists a mixing invariant measure supported on the whole space.

The next example is  $d$-dimensional model considered 
in the paper~\cite{chaos2023}:
\[
\frac{\partial u}{\partial t}+a_1(x)\frac{\partial u}{\partial x_1}+\dots+a_d(x)\frac{\partial u}{\partial x_d}=f(x,u).
\]
In construction of invariant measure we use
\textit{L\'evy $d$-parameter Brownian motion}, which is a Gaussian random field, and we show that the semiflow is isomorphic with a translation flow along the radii.

The last example is the heat equation:
\begin{equation}
\label{E-heat1}
u_t(t,x) = u_{xx}(t,x),\quad t \ge 0,\quad x \ge 0
\end{equation}
with the boundary and initial conditions
\begin{equation}
\label{E-heat2}
u_x(t,0)=0\quad\text{for $t\ge 0$},\quad  u(0,x)=v(x)\quad\text{for $x\ge 0$}.    
\end{equation}
Here we construct an invariant mixing measure supported on the phase space
$X=\{v\in C[0,\infty)\colon \lim_{x\to\infty}e^{-x}v(x)=0\}$.

\section{Remarks and Conclusions}
\label{s:R+C}

\subsection{Other mathematical research inspired by biology}
\label{ss:other-res}
I devoted the lecture mainly to presenting some mathematical issues without going too far into discussing their applications to biological models. Finally, I would like to show with the example of A. Lasota and my experience, that the study of biological issues can be inspiring for the development of mathematical methods.
\vskip1mm
1. Cell cycle modeling has influenced the 
development of the theory of Markov operators and the study of their asymptotic properties.
\vskip1mm

2. Structural population models lead to the development of the theory of transport equations, other than those found in physics, and the theory of semigroup operators.
\vskip1mm

3. The study of population models involves non-trivial nonlinear and non-local partial differential equations with delays~\cite{MR-nl}.
Studying the behavior of solutions to such equations is quite a challenge for mathematicians. 

\vskip1mm

4. Biologists are increasingly using individual-based models, also known as agent-based models. Individual-based models describe a population as a collection of different organisms
whose local interactions determine the behaviour of the entire population. The individual description is convenient for computer simulations and the determination
of various model parameters, and appropriate limit passages lead to interesting transport equations.
For example, the phenotypic model studied in the paper~\cite{RZ} after an appropriate limit passage leads to a transport equation,   including the Tjon-Wu equation of the energy distribution of particles in the Boltzmann equation,
which was studied by A. Lasota~\cite{lasota-Tjon--Wu}.

\subsection{To take home}\ \ 
\label{ss:ohome}
\vskip1mm

1. Markov operators and semigroups can be used to describe and prove ergodic and chaotic properties of dynamical systems.

\vskip1mm

2. Markov semigroups describe the evolutions of densities of Markov processes: diffusion processes, piecewise deterministic Markov processes, stochastic hybrid systems (diffusions with jumps).

\vskip1mm

3. The chaotic properties of dynamical systems generated by partial differential equations can be successfully studied using stochastic processes.

\vskip1mm

4. Modern biology boldly reaches for mathematical models, and virtually every part of mathematics can be useful in~modeling. Due to the random nature of biological phenomena, 
stochastics plays a major role in this regard.  
\vskip1mm

5. We will conclude with a quote from Sir Rory Collins head of clinical research at Oxford University:  ``\textbf{If you want a career in medicine these days you're better off studying mathematics or computing than biology.}"

\end{document}